\numberwithin{equation}{section}
\newtheorem{thm}{\textbf{Theorem}}[section]
\theoremstyle{remark}
\newtheorem{rem}[thm]{\textbf{Remark}}
\newtheorem{exe}[thm]{\textbf{Example}}
\theoremstyle{definition}
\newtheorem{quest}[thm]{{Question}}
\newtheoremstyle{Claim}{}{}{\itshape}{}{\itshape\bfseries}{:}{ }{#1}
\theoremstyle{Claim}
\newcommand{\R}{{\mathbb R}}
\newcommand{\wt}{\widetilde}
\newcommand{\Int}{\mathrm{Int}}
\newcommand{\tpitchfork}{%
	\vbox{
		\baselineskip\z@skip
		\lineskip-.52ex
		\lineskiplimit\maxdimen
		\m@th
		\ialign{##\crcr\hidewidth\smash{$-$}\hidewidth\crcr$\pitchfork$\crcr}
	}%
}
\newcommand{\cD}{\mathcal{D}}
\newcommand{\cF}{\mathcal{F}}
\newcommand{\cG}{\mathcal{G}}
\newcommand{\cH}{\mathcal{H}}
\newcommand{\cJ}{\mathcal{J}}
\newcommand{\cK}{\mathcal{K}}
\newcommand{\cM}{\mathcal{M}}
\newcommand{\cN}{\mathcal{N}}
\newcommand{\cP}{\mathcal{P}}
\newcommand{\cS}{\mathcal{S}}
\newcommand{\USC}{\mathrm{USC}}
\newcommand{\LSC}{\mathrm{LSC}}
\newcommand{\changelocaltocdepth}[1]{%
	\addtocontents{toc}{\protect\setcounter{tocdepth}{#1}}%
	\setcounter{tocdepth}{#1}%
}
\title[The Correspondence Principle]{The Correspondence Principle: A bridge between general potential theories and nonlinear elliptic differential operators}
\author[F.R. Harvey]{F. Reese Harvey}
\address{Department of Mathematics\\ Rice University\\ P.O. Box 1892\\ Houston, TX 77005-1892, USA}
\email{harvey@rice.edu (F. Reese Harvey)}
\author[K.R. Payne]{Kevin R. Payne}
\address{Dipartimento di Matematica ``F. Enriques''\\ Universit\`a di Milano\\ Via C. Saldini 50\\ 20133--Milano, Italy}
\email{kevin.payne@unimi.it (Kevin R.\ Payne)}\thanks{Payne partially supported by the Gruppo Nazionale per l'Analisi Matematica, la Probabilit\`a e le loro Applicazioni (GNAMPA) of the Istituto Nazionale di Alta Matematica (INdAM) and the project: GNAMPA 2024 ``Free boundary problems in noncommutative
	structures and degenerate operators''.}
\date{\today} \linespread{1.2}
\keywords{subequations, generale potential theory, fully nonlinear degenerate elliptic PDEs, correpspondene principles, comparison principles, viscosity solutions, admissibility constraints, monotonicty, duality}
\subjclass[2010]{35B51, 35J60, 35J70, 35D40, 31C45, 35E20 }
\begin{document}


\begin{abstract}
	
General potential theories concern the study of functions which are subharmonic with respect to a suitable constraint set $\cF$ in the space of 2-jets. While interesting in their own right, general potential theories are being widely used to study fully nonlinear PDEs determined by degenerate elliptic operators $F$ acting on the space of 2-jets. We will discuss a powerful tool, the {\em correspondence principle}, which establishes the equivalence between $\cF$–subharmonics/superharmonics $u$ and admissible subsolutions/supersolutions $u$ (in the viscosity sense) of the PDE determined by every operator $F$ which is compatible with $\cF$. The crucial {\em degenerate ellipticity} often requires the operator to be restricted to a suitable constraint set $\cG$, which determines the admissibility. Applications to comparison principles by way of the duality-monotonicity-fiberegularity method will also be discussed. 
	
\end{abstract}

\maketitle

\makeatletter
\def\l@subsection{\@tocline{2}{0pt}{2.5pc}{5pc}{}}
\makeatother

 \setcounter{tocdepth}{1}
\tableofcontents

\changelocaltocdepth{2}

\section{Introduction}\label{sec:intro} 

Our aim in this note is to discuss a bridge between two seemingly different realms. The first realm is that of {\em  general potential theories} which concern the study of generalized {\em subharmonics $u$} with respect to a given {\em constraint set (subequation)} in the space of $2$-jets
	\begin{equation}\label{se}
\cF \subsetneq \cJ^2(X) = X \times (\R \times \R^n \times \cS(n)), \quad X \subset \R^n \ \text{open};
	\end{equation}
as well as generalized {\em superharmonics} and {\em harmonics}. Here $\cS(n)$ is the vector space of all real symmetric $n \times n$ matrices (equipped with the natural partial ordering of the associated quadratic forms). The second realm is that of {\em nonlinear elliptic differential operators} $F : {\rm dom} \, F \subseteq \cJ^2(X) \to \R$, where one is interested in generalized {\em solutions} of the PDE determined by $F$
	\begin{equation}\label{e}
	 F(J^2u):= F(x, u(x), Du(x), D^2u(x)) = 0, \ \ x \in X,
	\end{equation}
as well as generalized {\em subsolutions/supersolutions}. The precise notions of $\cF$-subharmonics/superharmonics will be recalled in section \ref{sec:GPT} and those of subsolutions/supersolutions to \eqref{e} will be recalled in section \ref{sec:PEOs}.

Beginning with the trio of papers of Harvey and Lawson \cite{HL09a}, \cite{HL09b} and \cite{HL09c}, a rich interplay between the two realms is underway. Much can be said about this interplay and the reader is invited to consult the recent survey paper \cite{HP23}. Here we note only a few aspects of this fruitful interplay. On the one hand, the potential theory for a given constraint $\cF$ can simplify, clarify and unify the operator theory for \underline{every} operator $F$ which is {\em compatible} with $\cF$. Moreover, natural results in the potential theory can suggest interesting questions in the operator theory which at first glance might not come to mind. On the other hand, the operator theory of a ``nice'' operator $F$ yields useful information on a potential theory determined by a constraint $\cF$ compatible with $F$. For example, if $F$ is smooth, one can differentiate the equation \eqref{e} to study regularity of $\cF$-harmonics.

In this interplay, an important part of the story is the validity of the {\em correspondence principle}; that is, to determine conditions on an operator constraint pair $(F, \cF)$ for which one has the equivalences
\begin{equation}\label{CP:sub}
u \ \text{is} \  \cF\text{-subharmonic} \ \text{in} \ X	\ \ \Leftrightarrow  \ \  	u \ \text{is a  subsolution of} \ F(J^2u) = 0 \ \text{in} \ X 	.
\end{equation}
\begin{equation}\label{CP:super}
u \ \text{is} \  \cF\text{-superharmonic} \ \text{in} \ X  \ \ \Leftrightarrow  \ \ u \ \text{is a supersolution  of} \ F(J^2u) = 0 \ \text{in} \ X.
\end{equation}

It is worth noting that, in both realms there is a natural Dirichlet problem on domains $\Omega \Subset X$, but assumptions must be made on $\cF$ and $F$ in order to have well defined notions and a robust theory. Of particular interest in this context are: 

\begin{enumerate}
	\item the validity of the {\em comparison principle}
\begin{equation}\label{comparison}
u \leq w \ \text{on} \ \partial \Omega \quad \Rightarrow \quad u \leq w \ \text{on} \ \Omega 
\end{equation}
for each pair $u/w$ of sub/superharmonics for $\cF$ (sub/supersolutions of \eqref{e}), which ensures uniqueness for the Dirichlet problems;
\item  the existence of solutions to the Dirichlet problem by {\em Perron's method}, which also requires suitable boundary geometry of $\Omega$.
\end{enumerate}
If the correspondence principle holds, then resolving (1) and (2) on the potential theory side (for a subequation $\cF$) resolves (1) and (2) for every operator $F$ compatible with $\cF$.

As noted above, assumptions on the pair $F$ and $\cF$ will need to be made in order to have well defined potential theories and operator theories. We will use coordinates $J = (r,p,A) \in \cJ^2 = \R \times \R^n \times \cS(n)$ in the vector space of $2$-jets over each $x \in X \subset\R^n$. We will always assume the related minimal monotonicity properties of {\em degenerate ellipticity}
\begin{equation}\label{DE}
F(x,r,p,A) \leq F(x,r,p, A + P), \quad \forall \, (x,r,p,A) \in {\rm dom} \, F, \forall \, P \geq 0 \ \text{in} \ \cS(n) 
\end{equation}
and {\em positivity}
\begin{equation}
(x,r,p,A) \in \cF \ \Rightarrow \ (x,r,p,A +P)\in \cF, \quad \forall \,  P \geq 0 \ \text{in} \ \cS(n),
\end{equation}
which are needed for the definitions of $u \in \USC(X)$ to be a subsolution of $F(J^2u) = 0$ and $u$ to be $\cF$-subharmonic in $X$, respectively. The definitions are to be taken in the {\em viscosity sense} where one replaces the $2$-jet 
$$
J^2_x u = (u(x), Du(x), D^2 u(x))
$$
(which only makes sense if $u$ is twice differentiable at $x$) with the set 
\begin{equation*}\label{UCJets}
J^{2,+}_{x} u := \{ J^2_{x} \varphi:  \varphi \ \text{is} \ C^2 \ \text{near} \ x, \  u \leq \varphi \ \text{near} \  x \ \text{with equality in} \ x \},
\end{equation*}
of {\em upper test jets}. In order to have the degenerate ellipticity \eqref{DE}, one may need to restrict the domain of $F$, as happens for the Monge-Amp\`ere operator $F(D^2u) = {\rm det}(D^2u)$, where one must restict $F$ to the set convex functions on $X$. These needed assumptions on $F$ and $\cF$  will be discussed in more detail in sections \ref{sec:GPT} and \ref{sec:PEOs}, but the complete program will require sufficient {\em  monotonicity} and sufficient {\em regularity}. The regularity will always be sufficient in the {\em constant coefficient case} and the monotonicity will always be sufficient for $\cM$-monotone subequations $\cF$ and $\cM$-monotone operators $F$ with respect to a (constant coefficient) {\em monotonicity cone subequation} $\cM \subset \cJ^2 := \R \times \R^n \times \cS(n)$. Moreover, two additional (and natural) relations much be satisfied and are called the {\em correspondence relation} and the {\em compatibility condition}.

Before summarizing the contents of this note, we make some remarks on the evolution of the correspondence principle. The first explicit version of the correspondence principle was proven in the constant coefficient setting in \cite{CHLP23} under the assumption of $\cM$-monotonicity for the pair $(F, \cF)$, which is automatic in the gradient-free case. The extension to the variable coefficient setting was proven in \cite{CPR23} under the additional regularity assumption of  {\em fiberegularity} of the pair $(F, \cF)$. The fiberegularity is automatic in the constant coefficient setting. 

An important special case of this result was implicit in \cite{HL19a}, which studied inhomogeneous equations of the form
\begin{equation}\label{IHE}
G(D^2_x u) = f(x), \quad x \in \Omega \subset \R^n
\end{equation}
with $G$ a degenerate elliptic operator with subequation domain $\mathcal{A} \subset \cS(n)$ with the crucial assumption that $G$ is {\em topologically tame}.  This means that the level sets of $G$ have empty interior; this is implied by by an minimal strict monotonicity of $G$ in $A \in \mathcal{A}$. This was refined in an explicit way in \cite{HP25} for obtaining pointwise {\em apriori} estimates for viscosity solutions \eqref{IHE} where $G$ is an {\em $I$-central G\aa rding-Dirichlet polynomial} of degree $N$ on $\cS(n)$ and $f$ is continuous and non-negative. The correspondence principle in this context is proven in Theorem 4.4 of \cite{HP25} and concerns the (pure second order) operator-subequation pair
$$
F(x,A) = G(A) - f(x) \quad \text{and} \quad \cF = \{(x,A): \ A \in \overline{\Gamma} \subset \cS(n): G(A) - f(x) \geq 0\},
$$
where $\overline{\Gamma}$ is the (closed) {\em G\aa rding cone} associated to $G$. An interesting point in this analysis is how the {\em tameness property} of $G$ ensures the  fiberegularity of $(F, \cF)$ for each continuous $f$ (see Remark 4.3 of \cite{HP25}).

Earlier, more rudimentary versions of the correspondence principle were proven in \cite{CP17} and \cite{CP21} in the variable coefficient pure second order and gradient-free settings, respectively. In both of these earlier variable coefficient papers, the additional regularity assumption was already formulated, but a more cumbersome approach was taken which followed more closely some seminal ideas in the important paper of Krylov \cite{Kv95}.

We now descibe briefly the contents of this paper. We begin, in section \ref{sec:CPT}, with two simple but suggestive examples of the correspondence principle in the pure second order constant coefficient setting. These two examples concern the standard potential theory associated to the Laplace operator and the potential theory of convex functions associated to the minimal eigenvalue operator. 

In section \ref{sec:GPT}, we recall the basic notions from general potential theories, with an emphasis on the key notions of duality, monotonicity and fiberegularity. These ingredients give rise to a general and elegant method for proving the comparison principle \eqref{comparison} in the potential theoretic realm. 

In section \ref{sec:PEOs}, a class of nonlinear elliptic operators is recalled. These operators are called {\em proper elliptic} if (suitable restrictions of them) are continuous and have the two monotonicity properties of being {\em degenerate elliptic} as in \eqref{DE} and {\em proper} in the sense that 
\begin{equation}\label{proper}
F(x,r,p,A) \leq F(x,r + s,p, A), \quad \forall \, (x,r,p,A) \in \cG, \forall \, s \leq 0 \ \text{in} \ \R,
\end{equation}
which is needed to avoid obvious counterexamples to the comparison (and maximum) principles for $F$.
There are two cases: the {\em constrained case} in which $F$ is proper elliptic with $\cG = \cJ^2(X)$ (no restriction of the domain of $F$ is needed) and the {\em unconstrained case} in which $F$ must be restricted to a domain $\cG$ which is a subequation. In particular, the notions of {\em $\cG$-admissible subsolutions/supersolutions of $F(J^2u)=0$} are given.

In section \ref{sec:TCP}, we discuss the General Correspondence Principle (Theorem \ref{thm:GCP}) for a proper elliptic operator $F \in C(\cG)$ and the constraint set defined by the {\em correspondence relation}
\begin{equation}\label{CR:intro}
\cF:= \{ (x,J) \in \cG: \ F(x,J) \geq 0\}.
\end{equation}
The theorem states that if
\begin{equation}\label{Fsubequation}
\cF \ \text{defined by} \ \eqref{CR:intro} \ \text{is a subequation}
\end{equation}
and $(F, \cF)$ satisfies the {\em compatibility condition} \footnote{Here and below, given $\cF \subset \cJ^2(X)$, ${\rm Int} \, \cF$ is the interior of $\cF$ and $\cF^{c} = \cJ^2(X) \setminus \cF$ is the complement of $\cF$.}
\begin{equation}\label{CC:intro}
\Int \cF = \{ (x,J) \in \cG: \ F(x,J) > 0 \},
\end{equation}
then the correspondence principle holds; that is, on $X$:
	\begin{equation*}
u \ \text{is} \  \cF\text{-subharmonic} 	\ \ \Leftrightarrow  \ \  	u \ \text{is a $\cG$-admissible subsolution of} \ F(J^2u) = 0;
\end{equation*}
\begin{equation*}
u \ \text{is} \  \cF\text{-superharmonic} \ \ \Leftrightarrow  \ \ u \ \text{is a $\cG$-admissible supersolution  of} \ F(J^2u) = 0.
\end{equation*}
A proof of the theorem is given for the convenience of the reader, which stresses the importance of the assumptions made. In order to apply the theorem to a given proper elliptic operator $F \in C(\cG)$, one needs to check that the candidate subequation $\cF$ defined by the correspondence relation \eqref{CR:intro} is indeed a subequation and that the compatibility condition \eqref{CC:intro} is satisfied. Sufficient conditions for \eqref{Fsubequation} are the $\cM$-monotonicity and fiberegularity of the proper elliptic pair $(F, \cG)$. This is discussed in Theorem \ref{thm:FRMM}, which says that not only is $\cF$ defined by \eqref{CR:intro} a subequation, it is also $\cM$-monotone and fiberegular, and hence the monotonicity-duality-fiberegularity method  applies for proving the comparison principle. Finally, we address the question of whether a given proper elliptic operator $F \in C(\cG)$ admits a subequation $\cF$ that satisfies the compatibility condition \eqref{CC:intro}. We present three examples that show that this can be easy, impossible or requires some work.

\section{Two simple examples of the correspondence principle}\label{sec:CPT}

As a warm-up to the general correspondence principle, we consider two simple but representative examples of the correspondence principle. We start with the most classical example of the potential theory of subharmonic functions determined by the Laplace operator and then the potential theory of convex functions determined by the minimal eigenvalue operator. Both are examples of pure second order potential theories with constant coefficients, which allows for a streamlined notational treatment by ignoring the ``silent variables'' $(x,r,p) \in X \times \R \times \R^n$ in the space of $2$-jets over an open subset $X \subset \R^n$.

\subsection{The correspondence principle in Lapalcian potential theory}

We discuss two versions of the correspondence principle between sub/supersolutions of Laplace's equation and conventional sub/superharmonic functions. The two versions differ only in the regularity of the functions in play; one for for classical ($C^2$) regularity and the other for mere semicontinuity (viscosity notions). The passage from the first to the second is instructive as it highlights the importance of degenerate ellipticity for operators and positivity for constraint sets in a potential theory. 

Consider the {\em Laplace  operator}  $F : \cS(n) \to \R$ defined by $F(A) := {\rm tr} \, A$ which determines the Laplace equation
\begin{equation}\label{LE}
F(D^2u) = {\rm tr} \, D^2u = \Delta u = 0 \quad \text{in} \ X \subset \R^n \ \text{open}.
\end{equation}
Notice that $F$ has constant coefficients and is a pure second order operator. Naturally associated to $F$ is the pure second order constant coefficient {\em subharmonic constraint set} 
\begin{equation}\label{SH}
 \cH := \{ A \in \cS(n): \ \ F(A) = {\rm tr} \, A \geq 0 \}.
\end{equation}

\noindent{\bf Classical formulation:}
For $u \in C^2(X)$ one has the obvious equivalences between the differential inequality that defines classical subsolutions of \eqref{LE} and the differential inclusion that defines classical $\cH$-subharmonics; that is,
	\begin{equation}\label{Laplace1}
F(D^2u(x)) \geq 0 \quad \Leftrightarrow \quad 	D^2u(x) \in \cH 	, \quad \forall \, x \in X
	\end{equation}
and between classical supersolutions of \eqref{LE} and classical $\cH$-superharmonics
	\begin{equation}\label{Laplace2}
F(D^2u(x)) \leq 0 \quad \Leftrightarrow \quad 	D^2u(x) \in -\cH 	, \quad \forall \, x \in X.
	\end{equation}
Since $D^2(-u) = - D^2 u$,  one has that $u$ is $\cH$-superharmonic if and only if $-u$ is $\cH$-subharmonic where $\cH$ is {\em self-dual} in the sense that \footnote{See subsection \ref{sec:duality} for the general notion of duality.}  
$$
\cH = \wt{\cH} := - ( {\rm Int} \, \cH)^c
$$ 
and hence 
$$
\mbox{$u \in C^2(X)$ is $\cH$-superharmonic in $X \ \Leftrightarrow \ -u$ is $\wt{\cH}$-subharmonic in $X$}.
$$
The equivalences above are the correspondence principle for the Laplace operator and the harmonic constraint set for $C^2$ functions.

Notice that the pair $(F, \cH)$ has the related monotonicity properties:
\begin{equation*}
F(A) \leq F(A + P), \quad \forall \, A \in \cS(n), \forall \, P \geq 0 \ \text{in} \ \cS(n) \quad \text{(Degenerate Ellipticity)}
\end{equation*}
\begin{equation*}
A \in \cH \ \Rightarrow \ A + P \in \cH, \quad \forall \,  P \geq 0 \ \text{in} \ \cS(n) \quad \text{(Positivity)}
\end{equation*}
These monotonicity properties ensure that the correspondence principle above for $u \in C^2(X)$ extends to semicontinuous functions by using viscosity notions. 

\noindent {\bf Viscosity formulation:} For the viscosity calculus, it is convenient to consider 
semicontinuous functions taking values in the extended reals $[-\infty, \infty]$, more precisely we will define
$$
\USC(X) := \{ u : X \to [-\infty, + \infty): u(x) \geq \limsup_{X \ni y \to x} u(y), \quad \forall \, x \in X \}
$$
and
$$
\LSC(X):= \{ u : X \to (-\infty, + \infty]: u(x) \leq \liminf_{X \ni y \to x} u(y), \quad \forall x \in X \}.
$$
The basic idea of the viscosity formulation is the following.  In the differential inequalities and differential inclusions for operators and constraints, $\forall\, x \in X$ replace the Hessian $D^2u(x)$ (which only makes sense for $u$ twice differentiable) with 
$$
D^{2,\pm}_x u := \{ D^2 \varphi(x): \  \varphi \ \text{is $C^2$ near $x, u {\leq \above 0pt \geq} \varphi$  near $x$, and $u(x) = \varphi(x)\}$};
$$
the set of Hessians of {\em upper/lower contact functions $\varphi$ for $u$ at $x$}. One clearly has the two equivalences: for each $u \in \USC(X)$ and each $x \in X$
\begin{equation}\label{Laplace3}
	F(A) = {\rm tr} \, A \geq 0, \ \ \forall \, A \in D^{2,+}_x u \quad \Leftrightarrow \quad D^{2,+}_x u \subset \cH.
	\end{equation}
and for each  $u \in \LSC(X)$ and each $x \in X$
	\begin{equation}\label{Laplace4}
	F(A) = {\rm tr} \, A \leq 0, \ \ \forall \, A \in D^{2,-}_x u \ \  \Leftrightarrow \ \ D^{2,-}_x u \subset -\cH  \ \ \Leftrightarrow \quad D^{2,+}_x (-u) \subset \wt{\cH}.
	\end{equation}
The validity of \eqref{Laplace3} and \eqref{Laplace4} is the correspondence principle in the conventional Laplacian case. 

Two remarks are worth recording. 

\begin{rem}
It is straightforward to check that the properties \eqref{Laplace3} and \eqref{Laplace4} are equivalent to the validity of the classical mean value inequalities on balls and spheres contained in $X$ 
\end{rem}

\begin{rem}\label{rem:positivity}
The related monotonicity properties of degenerate ellipticity for $F$ and positivity for $\cH$ are necessary in order for the viscosity notions to be well defined. Indeed, suppose that $u$ is a viscosity subsolution for $F(D^2u) = 0$, but that degenerate ellipticity fails for $F$. Then for some $A \in \cS(n)$ and in some direction $P \in \cS(n)$ one has $F(A) > F(A + P)$. 
This is a problem if $F(A) = 0$ since, for each upper contact function $\varphi$ for $u$ at $x_0$ with $D^2 \varphi(x_0) = A$,
$$
\wt{\varphi}(x):= \varphi(x) + \frac{1}{2} \langle P(x - x_0), x - x_0 \rangle
$$
is also an upper contact function for $u$ at $x_0$ but 
$$
F((D^2\wt{\varphi})(x_0)) = F(A + P) < F(A) = 0,
$$
contradicting the the fact that $u$ is a viscosity subsolution.
\end{rem}

\subsection{The correspondence principle in convex potential theory}

Consider the {\em convexity constraint set}:
\begin{equation}\label{convexity}
	\cP:= \{A \in \cS(n): \ A \geq 0\}.
\end{equation}
As shown in \cite{HL09c}, the set $\cP(X) \subset \USC(X)$ of $\cP$-subharmonics on $X$ are the convex functions (on the connected components where $u \neq -\infty$). Moreover, the constraint set $\cP$ can be described in terms of the  {\em minimal eigenvalue operator} 
\begin{equation}\label{MEO}
\cP = \{ A \in \cS(n): \ F(A) := \lambda_{\rm min}(A) \geq 0 \ \text{in} \ \cS(n)\}.
\end{equation}
This easily yields the correspondence principle for the operator-constraint set pair $(\lambda_{\rm min}, \cP)$; namely for each $u \in \USC(X)$ and each $x \in X$
\begin{equation}\label{convex1}
\lambda_{\rm min}(A) \geq 0, \quad \forall \, A \in D^{2,+}_x u \quad \Leftrightarrow \quad D^{2,+}_x u \subset  \cP
\end{equation}
and for each $u \in \LSC(X)$ and each $x \in X$
\begin{equation}\label{convex2}
\lambda_{\rm min}(A) \leq 0, \quad \forall \, A \in D^{2,-}_x u \quad \Leftrightarrow \quad D^{2,-}_x u \subset - ({\rm Int} \, \cP)^{\circ}.
\end{equation}
Taken together \eqref{convex1} and \eqref{convex2} say that for $u \in \USC(X)$
\begin{equation}\label{convex3}
\mbox{$u$ is $\cP$-subharmonic in $X \ \ \Leftrightarrow \ \ u$ is a subsolution of $\lambda_{\rm min}(D^2u) = 0$ in $X$}
\end{equation}
and for $u \in \LSC(X)$ 
\begin{equation}\label{convex4}
\mbox{$u$ is a supersolution of $\lambda_{\rm min}(D^2u) = 0$ in $X \ \ \Leftrightarrow \ \ u$ is $\cP$-superharmonic in $X$.}
\end{equation}

The ``super'' part of the corresponence principle has an equivalent reformulation by exploiting duality. Consider the dual constraint set
\begin{equation}\label{subaffine}
\wt{\cP} := -( {\rm Int} \, \cP)^c = \{ A \in \cS(n): \lambda_{\rm max}(A) \geq 0\}
\end{equation}
is called the {\em subaffine constraint set} because $u \in \USC(X)$ is $\wt{\cP}$-subharmonic if and only if it is subaffine; that is, it satisfies the following comparison principle
	$$
	u \leq a \ \text{on} \ \partial \Omega \ \Rightarrow \ u \leq a \ \text{on} \  \Omega, \quad \forall \,  \ \text{affine on $\R^n$}, \ \forall \, \Omega \Subset X.
	$$
In the  ``super'' part of the correspondence principle \eqref{convex2}, for each $u \in \LSC(X)$ and for each $x \in X$ one clearly has
$$
\lambda_{\rm min}(A) \leq 0, \quad \forall \, A \in D^{2,-}_x u \quad \Leftrightarrow \lambda_{\rm max}(A) \geq 0, \ \ \forall \, A \in D^{2,+}_x u,
$$
which in light of \eqref{subaffine} yields
\begin{equation}\label{convex5}
\mbox{$u$ is a supersolution of $\lambda_{\rm min}(D^2u) = 0$ in $X \ \ \Leftrightarrow \ \ -u$ is $\wt{\cP}$-superharmonic in $X$.}
\end{equation}

\section{General potential theories}\label{sec:GPT}

In this section, for the benefit of the reader, we give a brief description of the notions from general potential theories that will be used in the general correspondence principle of section \ref{sec:TCP}. We will adopt an informal style; avoiding a long sequence of formal definitions 
but providing a precise description of what is needed. In all that follows, $X$ is an open subset of $\R^n$ with the $2$-jet space 
$$
\cJ^2(X) = X \times (\R \times \R^n \times \cS(n)) = X \times \cJ^2,
$$
as presented at the beginning of the introduction, and we adopt jet coordinates
$$
J = (r,p,A) \quad \text{for each} \ J \in \cJ^2.
$$
For a subset $\cF \subset \cJ^2(X)$ we will denote by
$$
\cF_x := \{ J  \in \cJ^2: (x,J) \in \cF\}, \quad x \in X
$$
the {\em fiber of $\cF$ over $x$}.

\subsection{Subequations.} This is the class of ``good'' constraint sets $\cF \subset \cJ^2(X)$ with a robust potential theory. There are three axioms. The first two are pointwise monotonicity properties; namely, the {\em positivity} condition
\begin{equation*}\label{P}\tag{P}
\forall \, x \in X: \quad (r,p,A) \in \cF_x \ \ \Rightarrow \ \ (r,p,A + P) \in \cF_x, \ \ \forall \, P \geq 0 \ \text{in} \ \cS(n),
\end{equation*}
and the {\em negativity} condition
\begin{equation*}\label{N}\tag{N}
\forall \, x \in X: \quad (r,p,A) \in \cF_x \ \ \Rightarrow \ \ (r,p + s,A) \in \cF_x, \ \ \forall \, s \leq 0 \ \text{in} \ \R.
\end{equation*}
The third axiom is the trio of {\em topological stability} conditions
\begin{equation*}\label{T}\tag{T}
	\cF = \overline{\Int\,  \cF}, \quad \left( \Int \, \cF \right)_x = \Int \, (\cF_x) , \quad \text{and} \quad  \cF_x = \overline{\Int \, \left( \cF_x \right)}, \quad \forall \, x \in X.
\end{equation*}
Notice that by the first property in (T), $\cF$ is closed in $\cJ^2(X)$ and each fiber $\cF_x$ is closed in $\cJ^2$ by the third property in (T). 
Also notice that in the constant coefficient pure second order case $\cF \subset \cS(n)$, as discsussed in section \ref{sec:CPT}, property (N) is automatic and property (T) reduces to the first property $\cF = \overline{\Int \, \cF}$, which is implied by (P) for $\cF$ closed. Hence in this case subequations $\cF \subset \cS(n)$ are closed sets simply satisfying (P).

The conditions (P), (T) and (N) have various (important) implications for the potential theory determined by $\cF$. Some of these will be mentioned at the end of the next subsection.

\subsection{Subharmonics}\label{sec:sub} Given a subequation $\cF$, there are two different formulations for its subharmonics at differing degrees of regularity. A function $u \in C^2(X)$ is {\em $\cF$-subharmononic on $X$} if
\begin{equation}\label{sub1}
J^2_x u = (u(x), Du(x), D^2u(x)) \in  \cF_x, \ \ \forall \, x \in X.
\end{equation}
For $u \in \USC(X)$, one replaces the two jet $J^2_x u$ with the set of {\em upper contact test jets for $u$ at $x$}
\begin{equation}\label{UCJ}
J^{2,+}_{x} u := \{ J^2_{x} \varphi:  \varphi \ \text{is} \ C^2 \ \text{near} \ x, \  u \leq \varphi \ \text{near} \  x \ \text{with equality in} \ x \},
\end{equation}
which are the $2$-jets of {\em upper contact functions $\varphi$ at $x$}. One says that  $u \in \USC(X)$ is {\em $\cF$-subharmononic on $X$} if
\begin{equation}\label{sub2}
J^{2,+}_x u \subset  \cF_x, \ \ \forall \, x \in X,
\end{equation}
and one denotes by $\cF(X)$ the space of $\cF$-subharmononics on $X$.

We now recall some of the implications that properties (P), (T) and (N) have on an $\cF$-potential theory. Property (P) is crucial for the well-posedness of the (viscosity) notion of $\cF$-subharmonicity, as discussed in Remark \ref{rem:positivity}. But it is also needed for {\em $C^2$-coherence}, meaning classical $\cF$-subharmonics are $\cF$-subharmonics in the sense \eqref{sub2}, since for $u$ which is $C^2$ near $x$, one has
$$
J^{2,+}_x u = J_x^2u + (\{0\} \times \{0\} \times \cP) \ \ \text{where} \ \ \cP = \{ P \in \cS(n): \ P \geq 0 \}.
$$
Next note that property (T) insures the local existence of strict classical $\cF$-superharmonics at points $x \in X$ for which $\cF_x$ is non-empty. One simply takes the quadratic polynomial whose $2$-jet at $x$ is $J \in \Int \, (\cF_x)$. Finally, property
(N) eliminates obvious counterexamples to comparison. The simplest counterexample is provided by the constraint set $\cF \subset J^2(\R)$ in dimension one associated to the equation $u^{\prime \prime} - u = 0$.

\subsection{Duality and superharmonics.}\label{sec:duality} A natural notion for a function $w \in \LSC(X)$ to be {\em $\cF$-superharmononic on $X$} is
\begin{equation}\label{super1}
J^{2,-}_x w \subset  \left( {\rm Int} \, \cF_x \right)^c, \ \ \forall \, x \in X \quad  \text{(complement in $\cJ^2$}),
\end{equation}
where 
\begin{equation}\label{LCJ}
J^{2,-}_{x} w := \{ J^2_{x} \varphi:  \varphi \ \text{is} \ C^2 \ \text{near} \ x, \  w \geq \varphi \ \text{near} \  x \ \text{with equality in} \ x \},
\end{equation}
is the space of {\em lower contact jets for $w$ at $x$}. However, for many reasons, it is better to reformulate \eqref{super1} using the notion of {\em duality}. For a given subequation $\cF \subset \cJ^2(X)$ the {\em Dirichlet dual} of $\cF$ is the set $\wt{\cF} \subset \cJ^2(X)$ given by 
\begin{equation}\label{dual}
\wt{\cF} :=  (- \Int \, \cF)^c = - (\Int \, \cF)^c \ \ \text{(complement in 
	$\cJ^2(X)$)}.
\end{equation}
With the help of property (T), the dual can be calculated fiberwise
\begin{equation}\label{dual_fiber}
\wt{\cF}_x :=  (- \Int \, (\cF_x))^c = - (\Int \, (\cF_x))^c, \ \ \forall \, x \in X \quad \text{(complement in  $\cJ^2$)}.
\end{equation}
This is a true duality in the sense that one can show
\begin{equation}\label{true_dual}
\wt{\wt{\cF}} = \cF \quad \text{and} \quad \text{$\cF$ is a subequation \ \ $\Leftrightarrow$ \ \  $\wt{\cF}$ is a subequation}. 
\end{equation}

\begin{rem}[Duality reformulation of superharmonics]\label{rem:super} Making use of duality \eqref{dual} and the fact that $J^{2,+}_x(-w) = -J^{2,-}_x w$, it is clear that  
\begin{equation}\label{super2}
 w \ \text{is \ $\cF$-superharmonic on $X$} \Leftrightarrow -w \in \wt{\cF}(X).
 \end{equation}
\end{rem}

Two additional notions will play a role in the proof of both general correspondence principles and general comparison principles. We will discuss them next.

\subsection{Monotonicity.}\label{sec:monotonicity} Given a subset  $\cM \subset \cJ^2$, a  subequation $\cF \subset \cJ^2(X)$ is {\em $\cM$-monotone} if
\begin{equation}\label{mono1}
\cF_x + \cM \subset \cF_x, \ \ \forall \, x \in X.
\end{equation}
Notice that combining the properties (P) and (N) for any subequation is equivalent to $\cF$ being $\cM_0$-monotone for the {\em minimal monotonicity cone} in $\cJ^2$ defined by
$$
\cM_0: = \cN \times \{0\} \times \cP = \{(s,0,P) \in \cJ^2: \ \text{$s \leq 0$ in $\R$ and $P \geq 0$ in $\cS(n)$} \}.
$$
$\cM_0$ satisfies properties (P) and (N) (it is $\cM_0$-monotone) but fails to be a subequation since it has empty interior and hence property (T) fails. 

Consider now the situation in which $\cF$ has additional monotonticity in the sense that it is $\cM$-monotone for a {\em monotonicity cone subequation} $\cM$ (a subequation which is also closed convex cone with vertex at the origin). Notice that if $\cF$ is {\em gradient-free} then this always happens because
$$
\cM(\cN, \cP):= \cN \times \R^n \times \cP
$$
is a monotonicity cone subequation for $\cF$.

If $\cF$ admits a monotonicity cone subequation $\cM$, then, using duality, one has the following important equivalence of {\em jet addition formulas} describing $\cM$-monotonicity
\begin{equation}\label{jaf}
\forall \, x \in X: \quad \cF_x + \cM \subset \cF_x \ \ \Leftrightarrow \ \ \cF_x + \wt{\cF}_x \subset \wt{\cM}.
\end{equation}
Since the dual cone $\wt{\cM}$ is also a subequation, it determines a (constant coefficient) potential theory. If the algebraic statement in the jet addition formula on the right of \eqref{jaf} yields an analytic statement in the form of a {\em subharmonic addition formula}
	\begin{equation}\label{saf}
u \in \cF(X), v \in \wt{\cF}(X) \ \ \ \Rightarrow \ \ u + v \in \wt{\cM}(X),
\end{equation}
then one has a clear strategy for proving the {\em comparison principle} on $\Omega \Subset X$:
\begin{equation}\label{CP1}
u \leq w \ \text{on} \ \partial \Omega \ \ \Rightarrow \ \ u \leq w \ \text{in} \ \Omega
\end{equation}
for each pair $u \in \USC(\overline{\Omega}), w \in \USC(\overline{\Omega})$ which are $\cF$-subharmonic, superharmonic respectively on $\Omega$. Namely, with $v:= -w \in \USC(\overline{\Omega}) \cap \wt{\cF}(\Omega)$, \eqref{CP1} is equivalent to 
\begin{equation}\label{CP2}
u + v \leq 0 \ \text{on} \ \partial \Omega \ \ \Rightarrow \ \ u + v \leq \ \text{in} \ \Omega, \quad \forall \, u \in \cF(\Omega), v \in \wt{\cF}(\Omega).
\end{equation}
However, if subharmonic addition \eqref{saf} holds, then the comparison principle \eqref{CP2} follows from the {\em Zero Maximum Principle on $\overline{\Omega}$ for dual cone subharmonics}: 
\begin{equation}\label{zmp}
z \leq 0 \ \text{on} \ \partial \Omega \ \Rightarrow \ z \leq 0 \ \text{in} \ \Omega, \quad \forall \, z \in \USC(\overline{\Omega}) \cap \wt{\cM}(\Omega).
\end{equation}
Morover, the validity of \eqref{zmp} for a dual monotonicity cone $\wt{\cM}$ on $\overline{\Omega}$ holds if
\begin{equation}\label{strict}
\mbox{$\exists \, \psi \in C^2(\Omega)$ strictly $\cM$-subharmonic in $\Omega$ \quad ($J^2_x \psi \in {\rm Int} \, \cM, \forall \, x \in \Omega$).}
\end{equation}
A classification of monotonicity cone subequations $\cM$ is known as is the determination for which domains $\Omega$ \eqref{strict} holds for a given $\cM$ (see Chapters 5 and 6 of \cite{CHLP23}). The reduction of the comparison principle \eqref{CP1} to the zero maximum principle for dual cone subequations \eqref{zmp} is called the {\em monotonicity-duality method} for proving the comparison principle in general potential theories. 

Notice that this method requires that jet addition \eqref{jaf} implies subharmonic addition \eqref{saf}. When $\cF$ has constant coefficients (constant fibers) this always happens as is shown in \cite{CHLP23}. In the variable coefficient case, an additional regularity condition on the subequation suffices.

\subsection{Fiberegularity.}\label{sec:fiberegularity} A subequation $\cF \subset \cJ^2(X)$ is {\em fiberegular} if the fiber map defined by
\begin{equation}\label{FR1}
\Theta: X \to \cK(\cJ^2) \ \ \Theta(x) := \cF_x \quad \forall \, x \in X
\end{equation}
is continuous where the closed subsets $\cK(\cJ^2)$ of $\cJ^2$ are equipped with the Hausdorff metric topology and $X$ inherits the euclidian topology of $\R^n$. When $\cF$ is $\cM$-monotone, fiberegularity has the more useful equivalent formulation: there exists $J_0 \in \Int \, \cM$ such that for each fixed $\Omega \subset \subset X$ and $\eta > 0$ there exists $\delta = \delta(\eta, \Omega)$ such that
\begin{equation}\label{FC}
x,y \in \Omega, |x-y| < \delta \ \ \Longrightarrow \ \  \Theta(x) + \eta J_0 \subset \Theta(y).
\end{equation}
Note that this property holds for each fixed $J_0 \in \Int \, \cM$ if it holds for one $J_0$ (see \cite{CPR23}) and that fiberegularity is uniform on bounded domains as the $\delta$ in \eqref{FC} is independent of $x,y \in \Omega$.

If $\cF$ is fiberegular and $\cM$-monotone, then \eqref{jaf} implies the subharmonic addition formula \eqref{saf} (see Theorem 4.4 of \cite{CPR23}) and hence the comparison principle on $\overline{\Omega}$ of \eqref{CP1} reduces to the zero maximum principle on $\overline{\Omega}$ of \eqref{zmp}. This is the {\em monotonicity-duality-fiberegularity method} for proving the comparison principle in general potential theories.

\section{Nonlinear elliptic differential operators}\label{sec:PEOs}

We now describe the class of degenerate elliptic operators that appears in the general correspondence principle. An operator $F \in C(\cG)$ where either
\begin{equation*}\label{case1}
\mbox{$\cG = \cJ^2(X)$ } \quad \text{({\em unconstrained case})}
\end{equation*}
or
\begin{equation*}\label{case2}
\mbox{$\cG \subsetneq  \cJ^2(X)$ is a subequation constraint set  \quad \text{({\em constrained case})}.}
\end{equation*}
is said to be {\em proper elliptic} if for each $x \in X$ and each $(r,p, A) \in \cG_x$ one has
\begin{equation}\label{peo}
F(x,r,p,A) \leq F(x,r + s, p, A + P) \ \quad \forall \, s \leq 0 \ \text{in} \ \R \ \text{and} \  \forall \, P \geq 0 \ \text{in} \ \cS(n).
\end{equation}
The pair $(F, \cG)$ will be called a {\em proper elliptic \footnote{In the unconstrained case, such operators are often refered to as {\em proper} operators (starting from \cite{CIL92}). We prefer to maintain the term  ``elliptic'' to emphasise the importance of the {\em degenerate ellipticity} ($\cP$-monotonicity in $A$) in the theory.}	(operator-subequation) pair}.

\begin{rem}(On proper ellipticity) A given operator $F$ must often be restricted to a suitable background subequation $\cG \subset \cJ^2(X)$ in order to satisfy the minimal monotonicty \eqref{peo} (the constrained case). The historical example is the Monge-Amp\`ere operator $F(D^2u) = {\rm det}(D^2u)$, where one restricts $F$ to the convexity subequation $\cG = \cP := \{ A \in \cS(n): A \geq 0 \}$. 
	\end{rem}

\noindent{\bf Admissible viscosity solutions:} Given a proper elliptic operator $F \in C(\cG)$, we say that 
\begin{itemize}
	\item[(sub)] $u \in \USC(X)$  is a {\em ($\cG$-admissible) subsolution of $F(J^2u)=0$ on $X$} if $\forall \, x \in X$ 
	\begin{equation}\label{AVSub}
	\mbox{$J \in J^{2, +}_{x}u$ \ \ $\Rightarrow$ \ \   $J \in \cG_x$ \ \ \text{and} \ \ $F(x, J) \geq 0$};
	\end{equation}
	\item[(super)] $u \in \LSC(\Omega)$ is a {\em ($\cG$-admissible) supersolution of $F(J^2u)=0$ on $X$} if $\forall \, x \in X$ 
	\begin{equation}\label{AVSuper}
	\mbox{$J \in J^{2, -}_{x}u$ \ \ $\Rightarrow$ \ \ either [ $J \in \cG_x$ and \ $F(x,J) \leq 0$\, ] \ or \ \ $J \not\in \cG_x$.}
	\end{equation}
\end{itemize}

In the unconstrained case where $\cG \equiv \cJ^2(X)$, the definitions are standard. In the constrained case where $\cG \subsetneq  \cJ^2(X)$, the definitions give a systematic way of doing of what is sometimes done in an ad-hoc way (see \cite{IL90} and \cite{Tr90}). In the constrained case (sub) says that subsolutions are also $\cG$-subharmonic and (super) says that $F(x,J) \leq 0$ for the lower test jets which lie in the constraint $\cG_x$.

\noindent {\bf Associated potential theory:} Given a proper elliptic operator $F \in C(\cG)$, a natural candidate for an associated subequation is
defined by the {\em correspondence relation}
\		\begin{equation}\label{CR0}
\cF = \{ (x,J) \in \cG: \ F(x,J) \geq 0 \}.
\end{equation}
The next section gives sufficient conditions under which $\cF$ defined by \eqref{CR0} for a proper elliptic pair $(F, \cG)$ is indeed a subequation and if the correspondence principle holds.

\section{The general correspondence principle}\label{sec:TCP}

The following general result is Theorem 7.4 of \cite{CPR23}. For the convenience of the reader, we reproduce the proof.

	\begin{thm}\label{thm:GCP}
	Given a proper elliptic operator $F \in C(\cG)$ and the associated constraint set $\cF$ defined by the correspondence relation
	\begin{equation}\label{CR}
	\cF = \{ (x,J) \in \cG: \ F(x,J) \geq 0 \}.
	\end{equation}
	Suppose that $\cF$ is a subequation and that it satisfies the compatibility condition 
	\begin{equation}\label{CC}
	\Int \, \cF = \{ (x,J) \in \cG: \ F(x,J) > 0 \},
	\end{equation} 
	which for a subequation is equivlent to $\partial \cF = \{ (x,J) \in\cG: \ F(x,J) = 0 \}$.
	Then the correspondence principle holds; that is: 
	\begin{equation*}\label{CP_sub}
u \ \text{is} \  \cF\text{-subharmonic} \ \text{in} \ X	\ \ \Leftrightarrow  \ \  	u \ \text{is a subsolution of} \ F(J^2u) = 0 \ \text{in} \ X 	.
	\end{equation*}
	\begin{equation*}\label{CP_super}
	u \ \text{is} \  \cF\text{-superharmonic} \ \text{in} \ X  \ \ \Leftrightarrow  \ \ u \ \text{is a supersolution  of} \ F(J^2u) = 0 \ \text{in} \ X.
	\end{equation*}
\end{thm}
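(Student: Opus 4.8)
The plan is to prove both equivalences by directly matching the definitions, the only non-formal step being the identification of the fiberwise interiors $\Int\,(\cF_x)$ via the compatibility condition together with axiom (T).

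First, for the subharmonic equivalence, I would observe that the correspondence relation \eqref{CR} says exactly $\cF_x = \{J \in \cG_x : F(x,J) \geq 0\}$ for every $x \in X$. Hence the condition $J^{2,+}_x u \subset \cF_x$ for all $x$ --- which is the definition \eqref{sub2} of $\cF$-subharmonicity --- reads verbatim as: for every $x$ and every $J \in J^{2,+}_x u$ one has $J \in \cG_x$ and $F(x,J) \geq 0$, which is precisely the defining condition \eqref{AVSub} of a $\cG$-admissible subsolution. No use of monotonicity, topology, or the compatibility condition is needed here: the two notions simply coincide once $\cF$ is defined by \eqref{CR}.

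Next I would compute the fiberwise interiors. Since $\cF$ is assumed to be a subequation, property (T) gives $(\Int\, \cF)_x = \Int\, (\cF_x)$ for each $x$; combined with the compatibility condition \eqref{CC} this yields $\Int\,(\cF_x) = \{J \in \cG_x : F(x,J) > 0\}$, and therefore its complement in $\cJ^2$ is $(\Int\,\cF_x)^c = \{J : J \notin \cG_x\} \cup \{J \in \cG_x : F(x,J) \leq 0\}$. (Along the way one sees, using that $\cF = \overline{\Int\,\cF}$ is closed, that \eqref{CC} is equivalent to $\partial \cF = \{(x,J) \in \cG : F(x,J) = 0\}$, as claimed in the statement.) For the superharmonic equivalence I would then use the definition \eqref{super1}: $u$ is $\cF$-superharmonic iff $J^{2,-}_x u \subset (\Int\,\cF_x)^c$ for all $x$ (equivalently $-u \in \wt{\cF}(X)$ by Remark \ref{rem:super}). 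By the description of $(\Int\,\cF_x)^c$ just obtained, a jet $J$ lies in it iff it is not the case that [$J \in \cG_x$ and $F(x,J) > 0$], i.e. iff [$J \notin \cG_x$] or [$J \in \cG_x$ and $F(x,J) \leq 0$] --- which is exactly the dichotomy in the definition \eqref{AVSuper} of a $\cG$-admissible supersolution. Letting $J$ range over $J^{2,-}_x u$ and over all $x \in X$ gives the equivalence.

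I do not expect a serious obstacle: the argument is essentially bookkeeping. The two places that require care are the logical negation in the superharmonic case (the ``either/or'' in \eqref{AVSuper} is the De Morgan dual of the ``and'' in the compatibility condition) and the invocation of property (T) to pass between the interior of a fiber $\Int\,(\cF_x)$ and the fiber of the interior $(\Int\,\cF)_x$ --- without (T), for a general constraint set these could differ and the superharmonic half would break down. It is also worth emphasizing in the write-up that the subharmonic equivalence is a tautology given \eqref{CR}, whereas the superharmonic equivalence genuinely consumes both hypotheses: that $\cF$ is a subequation and that $(F,\cF)$ is compatible.
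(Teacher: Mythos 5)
Your proof is correct and takes essentially the same route as the paper: a direct unwinding of the definitions for the subharmonic half, and the fiberwise interior/complement computation for the superharmonic half (the paper phrases the latter via the dual $\wt{\cF}$ and $J^{2,+}_x(-u) = -J^{2,-}_x u$, but the content is identical to your direct use of \eqref{super1}). The one point you make more explicit than the paper --- that property (T) is what licenses passing from $\Int\,\cF$ in the compatibility condition \eqref{CC} to the fiberwise $\Int\,(\cF_x)$ --- is a worthwhile clarification, since the paper's proof invokes $\wt{\cF}_x = -\left(\Int\,(\cF_x)\right)^c$ without flagging that this fiberwise identity rests on (T).
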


\begin{proof}
By definition, $u \in \USC(X)$ is a ($\cG$-admissible) subsolution of $F(J^2u) = 0$ if for each $x \in X$:
	$$
	J^{2,+}_x u \subset \cG_x \ \ \text{and} \ \ F(x, J) \geq 0, \ \text{for each} \ J \in J^{2,+}_x u,
	$$
	which, for $\cF$ defined by the correspondence relation \eqref{CR}, is the $\cF$-subharmonicity of $u$.

By duality, $u \in \LSC(X)$ is $\cF$-superharmonic if and only if $-u \in \USC(X)$ is $\wt{\cF}$-subharmonic; i.e.\ for each $x \in X$:
	\begin{equation}\label{superharm}
J^{2,+}_x (-u)  \subset \wt{\cF}_x := - \left( {\rm Int} \, \cF_x \right)^c,			
	\end{equation}
	but $J^{2,+}_x (-u) = - J^{2,-}_x u$ and so \eqref{superharm} is equivalent to
	$$
	J^{2,-}_x u \subset \left( {\rm Int} \, \cF_x \right)^c,
	$$
	which, by the correspondence relation \eqref{CR} and the compatibility condition \eqref{CC},  means 
	$$
	\forall \, J \in J^{2,-}_x u: \quad \text{either} \ J \not \in \cG_x \quad \text{or} \quad [ J \in \cG_x \ \text{and} \ F(x,J) \leq 0],
	$$
	which is the definition of $u$ being a ($\cG$-admissible) supersolution of $F(J^2u) = 0$.
	\end{proof}

\subsection{Remarks and questions conerning the correspondence principle}

Two important remarks should be made concerning the general correspondence principle. The first remark highlights 
an important consequence of the theorem and raises a question about the applicability of the theorem. The second remark begins an answer to the question.

\begin{rem}
	Theorem \ref{thm:GCP} says that the potential theory determined by any given subequation $\cF \subset \cJ^2(X)$ captures the subsolutions/supersolutions/solutions of the equation $F(J^2u) = 0$ for every proper elliptic operator $F \in C(\cG)$ which is compatible with $\cF$. This raises the question of whether given either a subequation $\cF$ or a proper elliptic operator $F$, can one determine the other so that the pair $(F, \cF)$ is compatible? 
\end{rem}

\begin{rem} Concerning the question raised in the remark above, we note that every subequation $\cF$ admits  associated compatible operators; one example is the {\em signed distance operator} defined by
	$$
	F(x,J) = \left\{ \begin{array}{ll} {\rm dist}(J, \partial \cF_x) & J \in \cF_x \\
	-{\rm dist}(J, \partial \cF_x) & J \in \cJ^2 \setminus \cF_x \end{array}	\right. .
	$$
On the other hand, given a proper elliptic operator $F \in C(\cG)$, the theorem requires that $\cF$ defined by the correspondence relation \eqref{CR} is indeed a subequation and that it also satisfies the compatibility condition \eqref{CC}.
\end{rem}

We formalize two questions concerning the applicability of Theorem \ref{thm:GCP}.

\begin{quest}\label{Q1}
Given a proper elliptic operator  $F \in C(\cG)$, what structural conditions on $(F, \cG)$ ensure that the constraint set $\cF$ defined by the correspondence relation
\begin{equation}\label{CR2}
\cF = \{ (x,J) \in \cG: \ F(x,J) \geq 0 \},
\end{equation}
is a subequation?
\end{quest}

\begin{quest}\label{Q2}
Given a proper elliptic operator  $F \in C(\cG)$, is it possibile to find a corresponding subequation that also satisfy the compatibility condition
\begin{equation}\label{CC2}
\Int \cF = \{ (x,J) \in \cG: \ F(x,J) > 0 \} \ \ \text{( equiv.} \ \partial \cF = \{ (x,J) \in\cG: \ F(x,J) = 0 \})?
\end{equation}
	\end{quest}

One answer to Question \ref{Q1} will be given in subsection \ref{sec:MMFR} and a discussion of Question \ref{Q2} will be given in subsection \ref{sec:exs}

\subsection{Fiberegular $\cM$-monotone subequations and operators}\label{sec:MMFR}

In this subsection, we give an answer to Question \ref{Q1}. Given a  given proper elliptic operator $F \in C(\cG)$, for each $x \in X, (r,p, A) \in \cG_x$ proper ellipticity means
	\begin{equation}\label{PEO}
F(x,r,p,A) \leq F(x,r + s, p, A + P) \ \quad \forall \, s \leq 0 \ \text{in} \ \R \ \text{and} \  \forall \, P \geq 0 \ \text{in} \ \cS(n).
	\end{equation}
The constraint set $\cF$ defined by \eqref{CR2} has fibers
	\begin{equation}\label{CR_x}
	\cF_x = \{ (r,p,A) \in \cG_x: \ F(x,r,p,A) \geq 0 \},
	\end{equation}
	which clearly satisfy the fiberwise subequation properties (P) and (N); that is,
	$$
(r,p,A) \in \cF_x \ \ \Rightarrow \ \ (r,p + s,A + P) \in \cF_x, \ \ \forall \, P \geq 0 \ \text{in} \ \cS(n), \forall \, s \leq 0 \ \text{in} \ \R.
	$$
The subequation topological stability properties (T)
	$$
\cF = \overline{\cF^{\circ}}, \ \  \left( \cF^{\circ} \right)_x = \left( \cF_x \right)^{\circ}, \ \ \cF_x = \overline{\left( \cF_x \right)^{\circ}}
	$$
can be proven under additional assumptions of $\cM$-monotonicity and fiberegularity of the pair $(F, \cG)$. More precisely, we assume that {\em $(F, \cG)$ is an $\cM$-monotone pair}, that is, there exists a monotonicity cone subequation $\cM \subset \cJ^2$ such that for each $x \in X$
	\begin{equation}\label{MM}
	\cG_x + \cM \subset \cG_x \quad \text{and} \quad F(x, J + J') \geq F(x,J) \quad \text{for each} \quad J \in \cG_x, J' \in \cM.
	\end{equation}
Next, we assume that {\em $(F, \cG)$ is fiberegular pair} in the sense that the following two conditions hold. First, $\cG$ is fiberegular; that is, the fiber map 
	\begin{equation}\label{FReg1}
	\Theta: X \to \cJ^2 \ \text{defined by} \ \Theta(x) = \cG_x \ \text{is continuous},
	\end{equation}
which has the equivalent formulation given in \eqref{FC} if $\cG$ is $\cM$-monotone. Second, assume that: $\exists \, J_0 \in {\rm Int} \, \cM$  such that $\forall \, \Omega \Subset X, \eta > 0 \ \exists \ \delta = \delta(\Omega, \eta)$ such that
	\begin{equation}\label{FReg2}
	F(y,J + \eta J_0) \geq F(x,J), \quad \forall \, J \in \cG_x, \forall \, x,y \in \Omega: |x-y| < \delta.
	\end{equation}
	
\begin{rem}[Fiberegular $\cM$-monotone pairs $(F,\cG)$] Every {\em gradient-free} proper elliptic pair $(F,\cG)$ is $\cM$-monotone with
$$
\cM := \cN \times \R^n \times \cP.
$$
Every $\cM$-monotone pair $(F,\cG)$	is fiberegular (the conditions \eqref{FReg1} and \eqref{FReg2} hold) in the unconstrained (convential) case where $\cG = \cJ^2$ or in the constant coefficient case.	
\end{rem}	
	
The following result (Theorem 7.11 of \cite{CPR23}) gives an answer to Question \ref{Q1}.

\begin{thm}\label{thm:FRMM}
	Let $(F, \cG)$ be a fiberegular $\cM$-monotone pair (i.e. \eqref{MM}, \eqref{FReg1} and \eqref{FReg2} hold). Then the constraint set defined by \eqref{CR2} 
	$$
	\cF = \{ (x,J) \in \cG: \ F(x,J) \geq 0 \}
	$$
	is a fiberegular $\cM$-monotone subequation. Moreover, the fibers are non empty if
	\begin{equation*}\label{NE}
	\Gamma(x) := \{ J \in \cG_x: \ F(x,J) = 0\} \neq \emptyset, \quad \forall \, x \in X.
	\end{equation*}
	Each fiber of $\cF$ is not all of $\cJ^2$ in the contrained case and also in the unconstrained case if one assumes
	\begin{equation*}\label{PF}
	 \{ J \in \cJ^2: \ F(x,J) < 0\} \neq \emptyset, \quad \forall \, x \in X.
	\end{equation*}
\end{thm}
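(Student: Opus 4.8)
The plan is to check, in order, that $\cF$ is $\cM$-monotone (which subsumes the positivity (P) and negativity (N) axioms, since $\cM_0 = \cN \times \{0\} \times \cP \subset \cM$ for any monotonicity cone subequation $\cM$), that $\cF$ satisfies the topological stability axioms (T) and is therefore a subequation, that $\cF$ is fiberegular, and finally to dispatch the two statements about the fibers. Throughout, $J_0 \in \Int \cM$ denotes the jet appearing in \eqref{FReg2}, and I would use freely that $\cM$ is a closed convex cone with vertex at the origin (so $\cM + \cM \subset \cM$ and $t\cM \subset \cM$ for $t \geq 0$) and that $B(J_0, \rho) \subset \cM$ for some $\rho > 0$. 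The $\cM$-monotonicity of $\cF$ is immediate from \eqref{MM}: if $(x,J) \in \cF$ and $J' \in \cM$, then $J + J' \in \cG_x + \cM \subset \cG_x$ and $F(x,J+J') \geq F(x,J) \geq 0$, so $(x, J+J') \in \cF$. Moreover $\cF$ is closed in $\cJ^2(X)$, being the zero-superlevel set of the continuous function $F$ on the closed set $\cG$.

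The crux of the argument — and the step I expect to be the main obstacle — is a \emph{pushing lemma}: for every $(x_0, J) \in \cF$ and every $\eta > 0$, the pushed jet $(x_0, J + \eta J_0)$ lies in $\Int \cF$. To prove it, fix a bounded open $\Omega$ with $x_0 \in \Omega \Subset X$ and split $\eta J_0 = \tfrac{\eta}{2}J_0 + \tfrac{\eta}{2}J_0$, spending one half on moving the base point and the other on absorbing a perturbation of the jet. Since $B(J_0, \rho) \subset \cM$, the cone property gives $B(\tfrac{\eta}{2}J_0, \tfrac{\eta}{2}\rho) \subset \cM$. Let $\delta$ be the smaller of the two radii furnished, at parameter $\tfrac{\eta}{2}$ on $\Omega$, by \eqref{FC} applied to $\cG$ (valid for our $J_0$, since $\cG$ is $\cM$-monotone and fiberegular) and by \eqref{FReg2}. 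Take any $(y, K)$ with $y \in \Omega$, $|y - x_0| < \delta$, and $|K - (J + \eta J_0)| < \tfrac{\eta}{2}\rho$, and set $w := K - J - \tfrac{\eta}{2}J_0$; then $w \in \cM$ and $K = (J + w) + \tfrac{\eta}{2}J_0$ with $J + w \in \cG_{x_0} + \cM \subset \cG_{x_0}$. Hence $K \in \cG_{x_0} + \tfrac{\eta}{2}J_0 \subset \cG_y$ by \eqref{FC} for $\cG$, and
$$
F(y, K) = F\big( y, (J+w) + \tfrac{\eta}{2}J_0 \big) \ \geq\ F(x_0, J+w) \ \geq\ F(x_0, J) \ \geq\ 0
$$
by \eqref{FReg2}, the $\cM$-monotonicity of $F$ (as $w \in \cM$), and $(x_0, J) \in \cF$. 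Thus a full neighborhood of $(x_0, J + \eta J_0)$ in $\cJ^2(X)$ lies in $\cF$. The delicate point is coordinating the base-point change (controlled by \eqref{FC} for $\cG$ and by \eqref{FReg2}) with the jet perturbation (absorbed into $\cM$ via $J_0 \in \Int\cM$), keeping track of which half of $\eta J_0$ pays for which operation and verifying each membership along the chain.

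Granting the pushing lemma, axioms (T) follow formally. Density of $\Int \cF$ in $\cF$, and — slicing at a fixed $x$ — of $\Int(\cF_x)$ in $\cF_x$, holds because $J + \eta J_0 \to J$ as $\eta \downarrow 0$ while $(x_0, J + \eta J_0) \in \Int \cF$; combined with closedness of $\cF$ and of its fibers this gives $\cF = \overline{\Int \cF}$ and $\cF_x = \overline{\Int(\cF_x)}$. For $(\Int \cF)_x = \Int(\cF_x)$, the inclusion $\subseteq$ is elementary (a product neighborhood inside $\cF$ slices to a neighborhood inside $\cF_x$); for $\supseteq$, given $J \in \Int(\cF_x)$ choose $\eta > 0$ small enough that $J - \eta J_0 \in \cF_x$ and apply the pushing lemma to $(x, J - \eta J_0)$ to obtain $(x, J) \in \Int \cF$. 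Hence $\cF$ is an $\cM$-monotone subequation. Fiberegularity of $\cF$ then follows from the $\cM$-monotone criterion \eqref{FC} applied to $\cF$: for $(r,p,A) \in \cF_x$, $\Omega \Subset X$, $\eta > 0$, and $|x-y|$ small, the same half-and-half splitting shows $(r,p,A) + \eta J_0 \in \cG_y$ (via \eqref{FC} for $\cG$, then $\cM$-monotonicity of $\cG$) and $F\big(y, (r,p,A)+\eta J_0\big) \geq F\big(x, (r,p,A)+\tfrac{\eta}{2}J_0\big) \geq F(x,(r,p,A)) \geq 0$ (via \eqref{FReg2} and $\cM$-monotonicity of $F$), so $\cF_x + \eta J_0 \subset \cF_y$.

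Finally, the fiber statements are immediate. If $\Gamma(x) \neq \emptyset$, any $J \in \Gamma(x)$ satisfies $J \in \cG_x$ and $F(x,J) = 0 \geq 0$, hence $J \in \cF_x$, so the fiber is non-empty. In the constrained case $\cF_x \subset \cG_x \subsetneq \cJ^2$, and in the unconstrained case, under the extra hypothesis, any $J$ with $F(x,J) < 0$ lies outside $\cF_x$, so $\cF_x \subsetneq \cJ^2$.
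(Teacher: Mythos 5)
Your proof is correct and gives a clean, self-contained account of what the paper only sketches (the paper defers the verification of (T) to Propositions A.2 and A.5 of \cite{CPR23}). The $\cM$-monotonicity, the fiber statements, and the fiberegularity of $\cF$ are all handled exactly as the paper indicates they should be, and the pushing lemma --- $(x_0,J)\in\cF \Rightarrow (x_0,J+\eta J_0)\in\Int\cF$, proved by the half-and-half split $\eta J_0 = \tfrac{\eta}{2}J_0 + \tfrac{\eta}{2}J_0$, using \eqref{FC} for $\cG$ and \eqref{FReg2} for $F$ to pay for the base-point move and $J_0\in\Int\cM$ to absorb the jet perturbation --- is exactly the right engine for all three parts of (T). This is in the same spirit as the reference's appendix, though you package it more directly: rather than citing abstract propositions about when the triad in (T) are mutually implied, you derive each of $\cF=\overline{\Int\cF}$, $\cF_x=\overline{\Int(\cF_x)}$, and $(\Int\cF)_x=\Int(\cF_x)$ as an immediate corollary of the single pushing lemma plus closedness of $\cF$ and its fibers. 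That organization is arguably tidier for a reader who wants to see the mechanism at work, at the cost of not isolating the reusable topological propositions that \cite{CPR23} states in generality. One small remark: in verifying fiberegularity of $\cF$ via \eqref{FC}, your first step already gives $(r,p,A)+\eta J_0 \in\cG_y$ directly from \eqref{FC} for $\cG$ at parameter $\eta$; the appeal to $\cM$-monotonicity of $\cG$ there is harmless but not needed unless you insist on using the parameter $\eta/2$ uniformly, which you do to make the single $\delta$ serve both \eqref{FC} and \eqref{FReg2} --- that bookkeeping is fine as written.
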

We refer the reader to \cite{CPR23} for a complete proof, but a guide to the proof follows. As noted above, the subequation properties (P) and (N) for $\cF$ follow automatically since $\cM$-monotoncity implies the proper ellipticity of the pair $(F, \cG)$. The proof of the $\cM$-monotonicty of $\cF$ also follows easily by the definitions. The proof that the subequation property (T) holds for $\cF$ if $(F, \cG)$ is a fiberegular $\cM$-monotone pair makes use of a careful analysis of the relations between the triad of conditions contained in property (T) (see Proposition A.2 and Proposition A.5 of \cite{CPR23}). The proof of the fiberegularity of $\cF$ (in the non automatic conditions) is straightforward.

\begin{rem}[On $\cM$-monotonicity and fiberegularity]\label{rem:MM}
These two additional conditions are used here to show that the candidate constraint $\cF$ defined by the correspondence relation \eqref{CR2} is indeed a subequation, but it gives more. Namely, one also obtains the fiberegularity and $\cM$-monotonicity, which are used in the monotonicity-duality-fiberegularity method for proving potential theoretic comparison. Moreover, in light of the correspondence principle, the potential theoretic comparison principle (for $\cF$) passes to the operator theoretic comparison principle (for the PDE determined by the operator $F \in C(\cG)$).
	\end{rem}

\subsection{Finding compatible subequations}\label{sec:exs}
We close by adressing Question \ref{Q2} of whether given a proper elliptic operator $F \in C(\cG)$ can one find a subequation $\cF$ which is compatible with $F$? We will present three examples which illustrate that this can be easy, impossible or requires some work.

\begin{exe}[Perturbed Monge-Amp\'ere]\label{exe:PME} With $f \in C(X,\R)$ non-negative and $M \in C(X; \cS(n))$ consider the equation
\begin{equation}\label{PMAE}
{\rm det} \, \left( D^2u + M(x) \right) = f(x), \quad x \in X.
\end{equation}
In order to have degenerate ellipticity for $F$, it is necessary to restrict $F(x,A) := {\rm det} \, \left( A + M(x) \right) - f(x)$ to the subequation constraint set $\cG$ defined by
	$$
	\cG_x = \{ (r,p,A) \in \cJ^2:  \ A + M(x) \geq 0\}, \quad x \in X.
	$$
It is clear that the pair $(F, \cG)$ is $\cM$-monotone for the monotonicity cone subequation $\cM= \R \times \R^n \times \cP$ and that $(F,\cG)$ is fiberegular.

Hence $\cF$ defined by the correspondence relation \eqref{CR2}
	$$
	\cF_x = \{ (r,p,A) \in \cG_x: \ F(x,A) := {\rm det} \, \left( A + M(x) \right) - f(x) \geq 0\}, \quad x \in X
	$$
	is an $\cM$-monotone fiberegular subequation. Finally, it is easy to see that the compatibility condition \eqref{CC2} holds; so the correspondence principle holds and the monotonicity-duality-fiberegularity method applies.
\end{exe}

\noindent {\bf Note:} This ``easy'' example is also a good example of ``potential theory helping the operator theory'' in the following sense. In order to apply to prove the comparison principle or make use of Perron's method, conventional viscosity methods (see \cite{CIL92}) require $M$ to be the square of a Lipschitz matrix-valued map. We require only that $M$ is continuous. It follows that the comparison principle
holds $\forall \, \Omega \Subset X$ and Perron's method applies (if $\partial \Omega$ stricly convex and $C^2$). See Theorem 5.9 \cite{CP17} for details.

\begin{exe}[Optimal transport with directionality]\label{exe:OTE} With $f \in C(X), g \in C(\R^n)$ non-negative consider the equation
\begin{equation}\label{OTE}
g(Du) \det(D^2 u) = f(x), \quad x \in X.
\end{equation}
A natural compatible subequation is given fiberwise by 	
$$
	\cF_x = \{ (r,p,A) \in \cM(\cP):=\R \times \R^n \times \cP: \ F(x,p,A) \geq 0\}, \quad x \in X,
$$
with $F(x,p,A):= g(p) \, {\rm det} \, \left( A \right) - f(x)$.
The subequation axioms (P) and (N) and the compatibility condition \eqref{CC2} hold for $\cF$, but $\cM$-monotonicty \eqref{MM} and fiberegulairty \eqref{FReg1}-\eqref{FReg2} may fail for $(F, \cM(\cP))$. 

 For \eqref{MM}, assume that the gradient factor $g$ has the monotonicity property of {\em directionality}; that is, $\exists \, \cD \subset \R^n$ a closed convex cone with vertex at the origin  such that
	\begin{equation}\label{D}
	g(p + q) \ge g(p), \quad \forall \, p,q \in \cD.
	\end{equation}
	Then, restricting $F$ to the monotonicity cone with constant fibers $\cG_x= \cM(\cD,\cP):=\R \times \cD \times \cP$
	gives a $\cM(\cD,\cP)$-monotone pair $(F, \cG)$, where $\cG$ is trivially fiberegular (\eqref{FReg1} holds).

 The other part of fiberegularity \eqref{FReg2} holds with an additional {\em regularity property} on $g$: there exist $\bar q \in \Int \, \cD$ and $\omega : (0,\infty) \to (0,\infty)$ satisfying $\omega(0^+) = 0$ such that	
	$$
	g(p + \eta \bar q) \ge g(p) + \omega(\eta) \quad \text{for each} \ p,q \in \cD \ \text{and each} \ \eta > 0.
	$$
\end{exe}

\noindent{\bf Note:} The equation \eqref{OTE} describes the optimal transport plan from the source density $f$ to the target density $g$ (see \cite{DF14} or \cite{V} for a desciption of the problem).

\begin{exe}[Correspondence fails]\label{exe:CC_fail} Consider the constant coefficient gradient free equation
\begin{equation}\label{E}
	{\rm det}(D^2u) - u = 0.
\end{equation}
 $F(r,A) := {\rm det}(A) - r$ is proper elliptic when resricted to the monotonicity cone subequations
	$$
	\cG_1= \{ (r,A) \in \R \times \cS(n) : r \leq 0, A \geq 0\} \ \ \text{or} \ \ \cG_2= \{ (r,A) \in \R \times \cS(n) :  A \geq 0 \}.
	$$
	Clearly $(F, \cG_k)$ is an $\cM$-monotone pair with $\cM = \cG_k$ and is fiberegularity since it has constant coefficients. 

 However, the $\cM$-monotone fiberegular subequations $\cF_k$ defined by the correspondence relation \eqref{CR2}
	$$
	\cF_k := \{ (r,A) \in \cG_k: F(r,A) := {\rm det}(A) - r \geq 0\}
	$$
	have boundaries which contain the set 
	$$
	\{(r,A) \in \R \times \cS(n): r < 0 \ \text{and} \ A = 0\},
	$$
	whose harmonics are negative affine functions, which do not satisfy \eqref{E}; the correspondence principle fails. 
	\end{exe}
	
	\noindent{\bf Note:} The problem here is that the compatibility condition \eqref{CC2} fails
	$$
\partial \cF_k \supsetneq \{ (r,A) \in \cG_k: \ F(r,A) = 0 \}.
	$$

\end{document}